\newtheorem{theorem}{Theorem}[section]
\newtheorem{remark}[theorem]{Remark}
\newtheorem{conjecture}[theorem]{Conjecture}
\newtheorem{lemma}[theorem]{Lemma}
\newcommand{\opt}{\overline{OPT}}
\newcommand{\bt}{\overline{bt}}
\newcommand{\btt}{\overline{b}}
\title[Further Arithmetic Properties of Overcubic Partition Triples]{Further Arithmetic Properties of Overcubic Partition Triples}
\author[M. P. Saikia]{Manjil P. Saikia}
\address[M. P. Saikia]{Mathematical and Physical Sciences division, School of Arts and Sciences, Ahmedabad University, Ahmedabad 380009, Gujarat India}
\email{manjil@saikia.in}
\author[A. Sarma]{Abhishek Sarma}
\address[A. Sarma]{Department  of Mathematical Sciences, Tezpur University, Napaam,  Tezpur 784028, Assam, India}
\email{abhitezu002@gmail.com}
\keywords{Integer Partitions, Ramanujan-type congruences, Radu's algorithm, Modular forms.}
\subjclass[2020]{11P81, 11P83.}
\begin{document}

\begin{abstract}
    In this short note, we prove several new congruences for the overcubic partition triples function, using both elementary techniques and the theory of modular forms. These extend the recent list of such congruences given by Nayaka, Dharmendra, and Kumar (2024). We also generalize overcubic partition triples to overcubic partition $k$-tuples and prove a few arithmetic properties for these type of partitions.
\end{abstract}
\maketitle
\section{Introduction}\label{sec:intro}

An integer partition of $n$ is a sequence of non-increasing parts $\lambda_1\geq \lambda_2\geq \cdots \geq \lambda_k$ such that $\sum\limits_{i=0}^k\lambda_k=n$. We denote by $p(n)$, the number of partitions of any nonnegative integer $n$. For instance, there are $7$ partitions of $5$, namely
\[
5, 4+1, 3+2, 3+1+1, 2+2+1, 2+1+1+1,~\text{and}~1+1+1+1+1,
\]
and hence $p(5)=7$. Euler proved that the generating function of $p(n)$ is given by
\[
\sum_{n\geq 0}p(n)q^n=\frac{1}{\prod_{i\geq 1}(1-q^i)}:=\frac{1}{f_1},
\]
where for $k\geq1$ we define the short-hand notation $f_k:=\prod_{i\geq 1}(1-q^{ki})$. For a general survey of the theory of partitions, we refer the reader to the books of Andrews \cite{AndrewsBook} and Johnson \cite{Johnson}.

Since the time of Euler, mathematicians have studied numerous special subsets as well as generalizations of the set of partitions; and a rich theory has grown up. One such generalization is that of the \textit{cubic partition} function, first studied by Chan \cite{Chan1, Chan2}. A cubic partition of $n$ is a partition of $n$ in which the even parts can appear in two colors. If the number of such partitions of $n$ are denoted by $a(n)$, then its generating function is given by
\[
\sum_{n\geq 0}a(n)q^n=\frac{1}{f_1f_2}.
\]

Shortly after Chan's work, Kim \cite{Kim} studied the overpartition analogue of the cubic partition, called the \textit{overcubic partition} function. The number of overcubic partitions of $n$, denoted by $\overline{a}(n)$ counts the number of overlined version of the cubic partitions counted by $a(n)$: that is, the cubic partitions where the first instance of each part is allowed to be overlined. The generating function is given by
\begin{equation}\label{gf-an}
  \sum_{n\geq 0}\overline{a}(n)q^n=\frac{f_4}{f_1^2f_2}.  
\end{equation}

Zhao and Zhong \cite{ZhaoZhong} subsequently studied the number of \textit{cubic partition pairs}, denoted by $b(n)$ with the following generating function
\[
\sum_{n\geq 0}b(n)q^n=\frac{1}{f_1^2f_2^2}.
\]
The overpartitions version of this function was studied by Kim \cite{Kim2}, who denoted by $\overline{b}(n)$ the number of overcubic partition pairs of $n$. The generating function for this function is given by
\[
\sum_{n\geq 0}\overline{b}(n)q^n=\frac{f_4^2}{f_1^4f_2^2}.
\]
As can be seen, we can extend this definition even further, as was done recently by Nayaka, Dharmendra, and Kumar \cite{NayakaDharmendraKumar}. They denoted by $\bt(n)$ the number of overcubic partition triples of $n$, and gave the following generating function
\begin{align}\label{eq-gf}
    \sum_{n\geq 0}\bt(n)q^n=\frac{f_4^3}{f_1^6f_2^3}.
\end{align}

One of the fundamental question that arises in the theory of integer partitions is whether $p(n)$ or any interesting subclass of partitions satisfy any nice arithmetic properties. For $p(n)$ this answer is found in the following celebrated Ramanujan's congruences
\begin{align*}
    p(5n+4)&\equiv 0 \pmod 5,\\
    p(7n+5)&\equiv 0\pmod 7,\\
    p(11n+6)&\equiv 0\pmod{11}.
\end{align*}
Following this cue from Ramanujan, several mathematicians have studied such `Ramanujan-type congruences' for many sub-classes of partitions as well as for generalizations of partitions, including for $a(n), \overline{a}(n), b(n), \overline{b}(n)$, and very recently for $\bt(n)$ by Nayaka, Dharmendra, and Kumar \cite{NayakaDharmendraKumar}. The main goal of this paper is to extend the list of congruences given by  Nayaka, Dharmendra, and Kumar \cite{NayakaDharmendraKumar}. We also extend the definition of overcubic partition triples to overcubic partition $k$-tuples and explore arithmetic properties of this class of partitions.

Before stating our main results, we note that Nayaka, Dharmendra, and Kumar \cite[Eq. (46)]{NayakaDharmendraKumar} showed that for all $n\geq 1$, we have
\[
\bt(2n+1)\equiv 0 \pmod 2.
\]
In fact, we have for all $n\geq 1$
\begin{equation}\label{pmod2}
\bt(n)\equiv 0\pmod 2.
\end{equation}
This follows easily from the binomial theorem by observing that
\[
\sum_{n\geq 0}\bt(n)q^n=\prod_{i\geq 1}\left(\frac{1+q^{2i}}{1+q^{2i}-2q^i}\right)^3=\prod_{i\geq 1}\left(1+2\frac{q^i}{1+q^{2i}-2q^i}\right)^3\equiv 1\pmod 2.
\]
We also note the following congruence, which we will use without commentary in the sequel: for a prime $p$, and positive integers $k$ and $l$, we have
     \begin{align*}
         f_{k}^{p^l} \equiv f_{pk}^{p^{l-1}} \pmod{p^l}.
     \end{align*}

We now state our first result.
\begin{theorem}\label{thm1}
    For all $n\geq 0$, we have
    \begin{align}
    \bt(4n+3)&\equiv 0 \pmod{4},\label{cong-0}\\
        \bt(8n+5)&\equiv 0 \pmod{32},\label{cong-1}\\
        \bt(8n+6)&\equiv 0 \pmod{4},\label{cong-01}\\
        \bt(8n+7)&\equiv 0 \pmod{64}\label{cong-2},\\
    \bt(16n+10)&\equiv 0 \pmod{32},\label{cong-3}\\
    \bt(16n+12)&\equiv 0 \pmod{4},\label{cong-4}\\
\bt(16n+14)&\equiv 0 \pmod{64}\label{cong-5},\\
\bt(32n+20)&\equiv 0 \pmod{32}\label{cong-6},\\
\bt(32n+24)&\equiv 0 \pmod{4}\label{cong-7},\\
\bt(32n+28)&\equiv 0 \pmod{64}\label{cong-8}.
    \end{align}
\end{theorem}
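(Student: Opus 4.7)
The plan is to iteratively 2-dissect \eqref{eq-gf} using the classical identity
\[
\frac{1}{f_1^2}=\frac{f_8^5}{f_2^5 f_{16}^2}+2q\,\frac{f_4^2 f_{16}^2}{f_2^5 f_8}=:X+2qY,
\]
in which both $X$ and $Y$ are power series in $q^2$. Raising to the $k$-th power, $\frac{1}{f_1^{2k}}=\sum_{j=0}^{k}\binom{k}{j}2^j q^j X^{k-j}Y^j$, so that each component in $q^j$ carries a prefactor $2^j$. Multiplying by the remaining eta-factors (also series in $q^2$), separating even- and odd-degree parts in $q$, substituting $q^2\to q$ (that is, $f_{2k}\mapsto f_k$), and iterating, I produce explicit eta-quotient expressions for $\sum_n \bt(2^a n+r)q^n$. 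The powers of $2$ accumulated from the binomial expansions, combined with the explicit coefficients $6,8,12$ appearing in $(X+2qY)^3$, supply exactly the factors of $2$ needed for the target congruences.

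Concretely, the first dissection of $\tfrac{f_4^3}{f_1^6 f_2^3}=\tfrac{f_4^3}{f_2^3}(X+2qY)^3$ gives, after $q^2\to q$,
\[
\sum_{n\ge 0}\bt(2n)q^n=\frac{f_2^3 f_4^{15}}{f_1^{18}f_8^6}+12q\,\frac{f_2^7 f_4^3 f_8^2}{f_1^{18}},\qquad
\sum_{n\ge 0}\bt(2n+1)q^n=\frac{6f_2^5 f_4^9}{f_1^{18}f_8^2}+8q\,\frac{f_2^9 f_8^6}{f_1^{18}f_4^3},
\]
which immediately recovers \eqref{pmod2}. A second dissection via $\frac{1}{f_1^{18}}=(X+2qY)^9$, applied to each of these two identities and reduced modulo the appropriate power of $2$, then yields the four base congruences \eqref{cong-0}, \eqref{cong-01}, \eqref{cong-1}, and \eqref{cong-2}. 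In every case the final step is a mod-$2$ reduction of the surviving eta-quotient to a sparse theta-like product, typically $f_4 f_8$ or $\psi(q^4)=f_8^2/f_4$, whose support in $q^n$ misses the desired residue and therefore forces the coefficient to be divisible by the next power of $2$.

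The remaining six congruences \eqref{cong-3}--\eqref{cong-8} describe arithmetic progressions that are doubled or quadrupled versions of the base ones, and they are handled in the same framework by performing one or two further dissections of the mod-$M$ generating functions for $\bt(2m)$, $\bt(4m)$, and $\bt(4m+2)$ already derived above. In each case the final mod-$2$ input is again a sparse theta-like product whose support avoids the target residue class. The principal obstacle is purely bookkeeping: the mod-$64$ statements \eqref{cong-2}, \eqref{cong-5}, and \eqref{cong-8} demand that six factors of $2$ accumulate via the combined action of the explicit coefficients $6,8,12$ from $(X+2qY)^3$ and the binomial coefficients $\binom{9}{j}2^j$ from $(X+2qY)^9$, leaving essentially no slack in the $2$-adic tracking and requiring several dozen surviving terms to be handled individually at each level.
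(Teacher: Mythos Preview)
Your approach is correct in outline but takes a genuinely different route from the paper. The paper proves Theorem~\ref{thm1} entirely via Smoot's \texttt{RaduRK} implementation of Radu's algorithm: for each congruence it feeds the eta-quotient data $(N,M,r,m,j)$ into the package, which returns a modular-function identity on $X_0(N)$ expressing $f_1(q)\sum_n \bt(mn+j)q^n$ as a polynomial in a Hauptmodul $t$, and the target power of $2$ drops out as the common factor of that polynomial's coefficients. No dissection formulas are used; the argument is a computer-certified modular-forms identity. Your proposal is the classical elementary route---iterated $2$-dissection via $1/f_1^2 = X+2qY$ together with $2$-adic bookkeeping---and the paper explicitly acknowledges in its concluding remarks that this works but was bypassed ``for the sake of brevity''; indeed the paper uses exactly your dissection identities in its proof of Theorem~\ref{thm2}, and your first-level formulas coincide with the paper's \eqref{eq:1} after separating parities. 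The trade-off is transparency versus labour: Radu's algorithm is uniform and fast but opaque, while your method is hands-on but, as you yourself note, leaves essentially no $2$-adic slack in the mod-$64$ cases and requires handling many terms individually. One small imprecision worth fixing: you say ``a second dissection'' already yields \eqref{cong-01}, \eqref{cong-1}, \eqref{cong-2}, but those live in progressions modulo $8$, so a third dissection (or an additional theta-support step after the second) is needed to isolate them.
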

\begin{remark}
Some of our congruences are better than those found by     Nayaka, Dharmendra, and Kumar. For instance, they had proved \cite[Theorem 1]{NayakaDharmendraKumar} for all $n\geq 0$
    \[
    \bt(8n+5)\equiv 0\pmod 8 \quad \text{and} \quad \bt(8n+7)\equiv 0\pmod{32}.
    \]
    They also proved \cite[Theorem 4]{NayakaDharmendraKumar}, for all $n\geq 0$
   \[
    \bt(16n+10)\equiv 0\pmod{16} \quad \text{and} \quad \bt(16n+14)\equiv 0\pmod{16}.
    \]
    Finally, they had also proved \cite[Theorem 5]{NayakaDharmendraKumar}
   \[
    \bt(32n+20)\equiv 0\pmod{16} \quad \text{and} \quad \bt(32n+28)\equiv 0\pmod{16}.
    \]
\end{remark}

There are numerous other congruences modulo powers of $2$ and multiples of $3$ that the $\bt(n)$ function satisfies. We prove the following two such congruences here to give a flavour.
\begin{theorem}\label{thm3}
    For all $n\geq 0$, we have
      \begin{align}
        \bt(72n+21)&\equiv 0 \pmod{128},\label{cong-1-thm3}\\
     \bt(72n+69)&\equiv 0 \pmod{384}\label{cong-2-thm3}.
    \end{align}  
\end{theorem}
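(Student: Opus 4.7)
The strategy is to recognize these two congruences as refinements of one already established in Theorem~\ref{thm1}, and then to impose one further level of dissection modulo $3$. Since $72n+21=8(9n+2)+5$ and $72n+69=8(9n+8)+5$, both progressions are subsequences of $8m+5$, so \eqref{cong-1} already gives divisibility by $32$. It therefore suffices to produce an additional factor of $4$ along $m\equiv 2\pmod 9$ and an additional factor of $12=4\cdot 3$ along $m\equiv 8\pmod 9$.

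First I would refine the $2$-dissection chain behind \eqref{cong-1} to obtain an explicit eta-quotient representative of $\sum_{m\geq 0}\bt(8m+5)\,q^{m}/32$ modulo $4$ (and modulo $12$ for the second congruence). Starting from \eqref{eq-gf} and using standard $2$-dissections of $1/f_1^2$, $1/f_1^4$, and $f_1^2$ in terms of $f_2,f_4,f_8,f_{16}$, together with the lifted Freshman's dream $f_k^{2^\ell}\equiv f_{2k}^{2^{\ell-1}}\pmod{2^\ell}$, one peels off the odd-indexed, then the $(4n+3)$-indexed, and finally the $(8n+5)$-indexed subseries, carefully tracking the $2$-adic valuation at each step so that no divisibility from Theorem~\ref{thm1} is squandered.

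Next I would apply a suitable $3$-dissection identity for $f_1$ (in one of the forms due to Ramanujan) to the resulting eta quotient, so as to isolate the $m\equiv 2\pmod 9$ and $m\equiv 8\pmod 9$ slices; this step may need to be iterated to reach modulus~$9$ rather than~$3$. After selecting the desired residue class and replacing $q^9$ by $q$, the slice is again expressible as an eta quotient whose $2$-adic valuation can be read off term by term. The extra factor of $3$ needed for \eqref{cong-2-thm3} should emerge at the last stage from the congruence $f_k^{3}\equiv f_{3k}\pmod 3$ applied to a cube that materialises in the $m\equiv 8$ slice.

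The principal obstacle is the combinatorial bookkeeping of the $3$-dissection: one must choose identities whose $q^{3r+\varepsilon}$ components remain eta quotients after extraction, and simultaneously monitor the $2$- and $3$-adic valuations so that every unit of divisibility is preserved through the substitution. If the elementary route grows unmanageable, one can fall back on Radu's algorithm, as hinted at by the paper's keywords: encode each claim as a modular-form identity on $\Gamma_0(N)$ with $N$ a suitable multiple of $72$, and verify the congruence by checking vanishing of finitely many $q$-expansion coefficients against a computable Sturm-type bound.
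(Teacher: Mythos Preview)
Your proposal is not a proof but a strategy sketch, and its primary route differs from the paper's. The paper proves Theorem~\ref{thm3} exactly the way it proves Theorem~\ref{thm1}: a direct application of Smoot's \texttt{RaduRK} implementation of Radu's algorithm, with the verification relegated to a Mathematica output file. No elementary $2$- or $3$-dissection argument is attempted; indeed, in the concluding remarks the authors explicitly say that an elementary proof of Theorem~\ref{thm3} ``would be further involved.'' So your fallback option \emph{is} the paper's actual proof.

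As for the elementary route you outline: the reduction to subsequences of $8m+5$ and the inherited factor of $32$ from \eqref{cong-1} are correct and a sensible starting point. But the remaining steps are only promissory. You have not exhibited an eta-quotient representative of $\displaystyle\frac{1}{32}\sum_{m\geq 0}\bt(8m+5)q^{m}$ modulo $4$ (let alone modulo $12$), nor shown that a Ramanujan-type $3$-dissection of that representative keeps the relevant residue classes in eta-quotient form, nor verified that a cube actually materialises in the $m\equiv 8\pmod 9$ slice to supply the extra factor of $3$. Each of these is a genuine computation whose outcome is not predictable in advance; the paper's authors presumably declined to pursue it precisely because the bookkeeping is substantial. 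Until those steps are executed, what you have is a plausible plan, not a proof.
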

\noindent We prove Theorems \ref{thm1} and \ref{thm3} in Section \ref{sec:proof-thm:cong}, using Smoot's implementation \cite{Smoot} of Radu's algorithm \cite{Radu2,Radu} coming from the theory of modular forms.

The main purpose of proving Theorem \ref{thm1} was to `guess' the following theorem.
\begin{theorem}\label{thm2}
    For all $n\geq 0$ and $\alpha\geq 0$, we have
    \begin{align}
        \bt(2^\alpha(4n+3))&\equiv 0 \pmod 4,\label{cong-1-thm2}\\
        \bt(2^\alpha(8n+5))&\equiv 0 \pmod{32}\label{cong-2-thm2}.
    \end{align}
\end{theorem}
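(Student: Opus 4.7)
The approach is induction on $\alpha$, with the base case $\alpha=0$ for the two congruences provided by \eqref{cong-0} and \eqref{cong-1} of Theorem~\ref{thm1} respectively. The substance of the proof is then to control what happens under the doubling $m\mapsto 2m$ modulo $4$ and modulo $32$ on the relevant residue classes.

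For \eqref{cong-1-thm2} I plan to establish the clean global identity
\begin{equation*}
\sum_{n\geq 0}\bt(2n)\,q^n\;\equiv\;\sum_{n\geq 0}\bt(n)\,q^n\pmod 4,
\end{equation*}
which automatically propagates divisibility by $4$ from $m$ to $2m$, so that the base case $m=4n+3$ immediately gives the required conclusion for every $\alpha\geq 0$. To derive the identity, one begins with \eqref{eq-gf}, uses $f_1^4\equiv f_2^2\pmod 4$ to rewrite $\sum\bt(n)q^n\equiv f_4^3/(f_1^2 f_2^5)\pmod 4$, and applies the classical 2-dissection
\[
\frac{1}{f_1^2}=\frac{f_8^5}{f_2^5 f_{16}^2}+2q\,\frac{f_4^2 f_{16}^2}{f_2^5 f_8}.
\]
Extracting the even-indexed part and replacing $q^2\mapsto q$ yields $\sum\bt(2n)q^n\equiv f_2^3 f_4^5/(f_1^{10}f_8^2)\pmod 4$. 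A short simplification using $f_1^{10}\equiv f_1^2 f_4^2\pmod 4$ and the consequence $f_2^8\equiv f_8^2\pmod 4$ of the same lemma then matches this back with $f_4^3/(f_1^2 f_2^5)$, completing the verification of the identity.

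For \eqref{cong-2-thm2} the naive global analogue $\sum\bt(2n)q^n\equiv\sum\bt(n)q^n\pmod{32}$ is false (already $\bt(4)-\bt(2)=372\equiv 20\pmod{32}$), so the inductive step must be restricted to indices of the shape $2^{\alpha}(8n+5)$. My plan is to push the 2-dissection further: combining the expansion above with a second application of the dissection of $1/f_1^2$ should allow one to isolate
\[
\sum_{n\geq 0}\bt(8n+5)\,q^n\equiv 32\,\Phi(q)\pmod{64}
\]
for an explicit eta-quotient $\Phi(q)$, and then to check that the operator ``extract $q^{2n}$-coefficients and set $q\mapsto q^{1/2}$''---which carries $\sum\bt(2^{\alpha}(8n+5))q^n$ to $\sum\bt(2^{\alpha+1}(8n+5))q^n$---preserves the shape $32\cdot(\text{eta-quotient})\pmod{32}$, so that divisibility by $32$ is inherited at each inductive step.

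The main obstacle will be this last piece of book-keeping: repeated 2-dissections produce a proliferation of correction terms, and showing that all of them lie in $32\,\mathbb{Z}[[q]]$ requires careful juggling with the higher lifts $f_1^{2^k}\equiv f_2^{2^{k-1}}\pmod{2^k}$ for $k=2,\ldots,5$. If the direct manipulation becomes unwieldy, a natural fallback is to use Smoot's implementation of Radu's algorithm (as in the proof of Theorem~\ref{thm1}) to verify a single finite identity of the form $\sum\bt(2^{\alpha+1}(8n+5))q^n\equiv A(q)\sum\bt(2^{\alpha}(8n+5))q^n\pmod{32}$ for an explicit eta-quotient $A(q)$, thereby collapsing the entire induction step to a finite modular-form check.
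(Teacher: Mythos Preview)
Your plan for \eqref{cong-1-thm2} is essentially the paper's: both routes establish the global relation $\bt(2n)\equiv\bt(n)\pmod 4$ and then invoke the base case \eqref{cong-0}. The paper extracts the even part from the exact four-term dissection \cite[Eq.~(45)]{NayakaDharmendraKumar} and then reduces, whereas you reduce modulo $4$ first and then dissect $1/f_1^2$; the two computations meet at the same identity.

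For \eqref{cong-2-thm2}, however, your inductive mechanism is broken. Applying ``extract $q^{2n}$-coefficients and set $q\mapsto q^{1/2}$'' to $\sum_{n\geq 0}\bt(2^{\alpha}(8n+5))q^n$ yields $\sum_{n\geq 0}\bt(2^{\alpha}(16n+5))q^n$, not $\sum_{n\geq 0}\bt(2^{\alpha+1}(8n+5))q^n=\sum_{n\geq 0}\bt(2^{\alpha}(16n+10))q^n$; the indices $2^{\alpha}(16n+5)$ and $2^{\alpha}(16n+10)$ lie in disjoint residue classes, so nothing you learn about the even-indexed subseries feeds back into the target family. Your Radu fallback has the same structural problem: an identity of the form you propose depends on $\alpha$, and a single modular-form check cannot certify it for all $\alpha$ at once.

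The paper sidesteps this by proving an \emph{unrestricted} doubling relation one level deeper: using two rounds of the dissections in Lemma~\ref{lem-diss} it computes both $\sum\bt(4n)q^n$ and $\sum\bt(8n)q^n$ modulo $32$ and shows they coincide, giving $\bt(4n)\equiv\bt(8n)\pmod{32}$ for every $n$. This relation carries $\alpha$ to $\alpha+1$ whenever $2^{\alpha}(8n+5)$ is already a multiple of $4$, i.e.\ for $\alpha\geq 2$; the three initial cases $\alpha=0,1,2$ are precisely \eqref{cong-1}, \eqref{cong-3}, \eqref{cong-6} of Theorem~\ref{thm1}. The fix to your approach is therefore to drop the residue-class restriction and aim for an identity between $\sum\bt(4n)q^n$ and $\sum\bt(8n)q^n$ modulo $32$ directly.
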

\noindent We prove Theorem \ref{thm2} in Section \ref{sec:proof-thm2} using elementary means. We can go even further, but we leave the following as an open problem.
\begin{conjecture}
For all $n\geq 0$ and $\alpha\geq 0$, we have
\[   \bt(2^\alpha(8n+7))\equiv 0 \pmod{64}.\label{cong-3-thm2}\]
\end{conjecture}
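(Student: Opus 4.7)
I would proceed by induction on $\alpha$. The base cases $\alpha=0,1,2$ are precisely \eqref{cong-2}, \eqref{cong-5}, and \eqref{cong-8} of Theorem \ref{thm1}, so the task is the inductive step: assuming $\bt(2^\alpha(8n+7)) \equiv 0 \pmod{64}$ for some $\alpha\geq 2$ and all $n$, deduce the same at level $\alpha+1$. Define the level-$\alpha$ generating function
\[
F_\alpha(q) := \sum_{n\geq 0}\bt(2^\alpha(8n+7))\, q^n.
\]
Mirroring the elementary approach used for Theorem \ref{thm2}, the aim is to exhibit an identity, valid modulo $2$, of the form
\[
\frac{F_{\alpha+1}(q)}{64} \equiv \Psi(q)\cdot \frac{F_\alpha(q)}{64} \pmod 2,
\]
with $\Psi(q)\in\mathbb{Z}[[q]]$ an eta-quotient independent of $\alpha$. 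Such an identity closes the induction immediately. To discover $\Psi$, I would apply the classical 2-dissection
\[
\frac{1}{f_1^2} \;=\; \frac{f_8^5}{f_2^5 f_{16}^2} \;+\; 2q\,\frac{f_4^2 f_{16}^2}{f_2^5 f_8}
\]
(cubed) in combination with \eqref{eq-gf}, iterate the even-part extraction three times so as to pick out the residue class $8n+7$ in $\sum_n \bt(2^\alpha n)q^n$ modulo $128$, and then compare the resulting eta-quotient expressions for $F_\alpha(q)/64$ across consecutive $\alpha$.

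\textbf{Main obstacle.} The chief difficulty is that a single rank-one recursion $\Psi$ need not exist: the sequence $\{F_\alpha(q)/64\}$ may only lie in a higher-dimensional $U_2$-invariant $\mathbb{Z}$-lattice, in which case the induction must be carried out simultaneously on a vector of generating series indexed by several linked residue classes, and the scalar recursion replaced by a matrix one. The real work is to identify the correct vector and the correct lattice; once this is done, the step is merely a finite linear-algebra check modulo $2$.

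\textbf{A computational route.} If the hand-calculation is intractable, the practical alternative is to feed the problem into Smoot's implementation \cite{Smoot} of Radu's algorithm: first verify several further cases $\alpha=3,4,5,\ldots$ directly, use the output to guess a finite-rank eta-quotient sublattice of modular functions on $\Gamma_0(2^N)$ (for $N$ forced by the computation) containing $F_0(q)/64$ and stable under $U_2$ modulo $2$, and then certify the stability via a Sturm-bound verification. Existence of such a lattice is expected on general grounds from the theory of modular forms, but pinning down its explicit basis — and in particular bounding its rank so that the Sturm bound is achievable — is where I expect the substantive effort to lie.
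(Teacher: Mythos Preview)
The paper does \emph{not} prove this statement: it is labeled a Conjecture and is explicitly introduced with ``we leave the following as an open problem.'' There is therefore no proof in the paper to compare your proposal against.

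What you have written is not a proof but a strategy. As a strategy it is sensible and is the natural extrapolation of the paper's own successful method for \eqref{cong-2-thm2}: there the key step is the identity $\bt(4n)\equiv\bt(8n)\pmod{32}$ obtained by two rounds of elementary $2$-dissection, which together with the base cases \eqref{cong-1}, \eqref{cong-3}, \eqref{cong-6} closes the induction. The obvious first thing to try for the present conjecture is the stronger relation $\bt(4n)\equiv\bt(8n)\pmod{64}$; the fact that the authors stop at modulus $32$ and leave the modulus-$64$ case open suggests either that this stronger global relation is false or that the dissection bookkeeping no longer collapses cleanly. Your idea of restricting attention to the subsequence $F_\alpha(q)=\sum_n\bt(2^\alpha(8n+7))q^n$ and seeking a recursion only there (possibly vector-valued, under $U_2$) is the right refinement if the global congruence fails.

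That said, your write-up is a plan with acknowledged gaps, not a proof: you have not produced the map $\Psi$, nor identified the putative $U_2$-stable lattice, nor verified any Sturm bound. Until one of those is actually carried out, the conjecture remains open, exactly as the paper leaves it.
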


The main purpose of proving Theorem \ref{thm3} was to `guess' the following conjecture.
\begin{conjecture}\label{conj2}
    For all $n\geq 0$ and $\alpha\geq 0$, we have
    \begin{align}
    \bt(144n+42)&\equiv 0 \pmod{384},\label{cong-3-thm3}\\
        \bt(2^\alpha(72n+21))&\equiv 0 \pmod{128},\label{cong-1-thm4}\\
        \bt(2^\alpha(72n+69))&\equiv 0 \pmod{128}.\label{cong-2-thm4}
    \end{align}
\end{conjecture}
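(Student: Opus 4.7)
The strategy splits the conjecture into a $2$-adic tower and a separate factor of $3$. I would treat (\ref{cong-1-thm4}) and (\ref{cong-2-thm4}) by induction on $\alpha$, with Theorem \ref{thm3} serving as the base case $\alpha=0$; the $72n+69$ residue at $\alpha=0$ in (\ref{cong-2-thm4}) is automatic since $128\mid 384$. Assertion (\ref{cong-3-thm3}) would then follow by combining the $\alpha=1$ instance of (\ref{cong-1-thm4}) with an independent divisibility by $3$.

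For the mod $3$ ingredient, reducing (\ref{eq-gf}) via $f_k^3 \equiv f_{3k} \pmod 3$ gives
\[
\sum_{n\geq 0}\bt(n)q^n = \frac{f_4^3}{f_1^6 f_2^3} \equiv \frac{f_{12}}{f_3^2 f_6} = \sum_{n\geq 0}\overline{a}(n)q^{3n} \pmod{3},
\]
using (\ref{gf-an}) in the last step. Hence $\bt(3m)\equiv \overline{a}(m)\pmod 3$, while $\bt(n)\equiv 0 \pmod 3$ whenever $3\nmid n$. Writing $144n+42=3(48n+14)$ and noting $48n+14\equiv 2\pmod 3$, the claim $\bt(144n+42)\equiv 0\pmod 3$ reduces to the overcubic congruence $\overline{a}(3m+2)\equiv 0\pmod 3$, which I would prove by a short elementary $3$-dissection of $\frac{f_1 f_4}{f_2 f_3}$ (the mod $3$ reduction of $\frac{f_4}{f_1^2 f_2}$), or by a single application of Smoot's package \cite{Smoot}.

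The induction step $\alpha\to\alpha+1$ for (\ref{cong-1-thm4}) and (\ref{cong-2-thm4}) is where the substance lies. The aim is to produce an internal $2$-dissection identity valid modulo $128$, i.e.\ an explicit eta-quotient expression for $\sum_{n\geq 0}\bt(2n)q^n$ (and possibly $\sum_{n\geq 0}\bt(2n+1)q^n$) that converts a mod-$128$ congruence on the sub-progression $72\cdot 2^{\alpha+1}n+r\cdot 2^{\alpha+1}$ into one on $72\cdot 2^{\alpha}n+r\cdot 2^{\alpha}$. This is the same philosophy used in the elementary proof of Theorem \ref{thm2} in Section \ref{sec:proof-thm2}: iterated 2-dissections of $1/f_1^2$, $1/f_1^4$, $f_4^3/f_2^3$, etc., combined with standard theta-function identities, yield explicit forms for the even- and odd-indexed parts of $\bt$. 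Once such an identity is on the table, feeding the base case from Theorem \ref{thm3} through it propagates the divisibility up the tower.

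The hard part will be producing the correct $2$-dissection identity modulo $2^7=128$. The modulus is high, and because the residues $21\cdot 2^\alpha$ and $69\cdot 2^\alpha$ switch parity between $\alpha=0$ and $\alpha=1$, the structural setup at the smallest levels differs from that at $\alpha\geq 1$, so a single uniform recurrence may not cover all $\alpha$ at once. A pragmatic workaround is to verify (\ref{cong-1-thm4}) and (\ref{cong-2-thm4}) at $\alpha=1$ and $\alpha=2$ directly with Smoot's implementation of Radu's algorithm, and then set up the induction only for $\alpha\geq 2$, by which point all relevant residues are divisible by $4$ and the dissection recurrence should become uniform in $q$ (rather than in $q^{1/2}$). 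Carrying out the resulting eta-quotient manipulations modulo $128$ is where the genuine work, and the reason the statement has remained a conjecture, is most likely to lie.
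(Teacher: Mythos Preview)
The statement you are attempting to prove is explicitly a \emph{conjecture} in the paper; the authors do not prove it and remark in Section~\ref{sec:conc} that an elementary proof of Theorem~\ref{thm3} ``would probably give some hints on how to prove Conjecture~\ref{conj2}.'' There is therefore no proof in the paper against which to compare your proposal.

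What you have written is a strategy outline, and you are candid that it is not complete. The decomposition into a mod-$3$ factor and a $2$-adic tower is sound. Your reduction $\sum_{n\geq 0}\bt(n)q^n \equiv \sum_{n\geq 0}\overline{a}(n)q^{3n} \pmod 3$ is correct, and the needed input $\overline{a}(3m+2)\equiv 0\pmod 3$ is precisely the subject of Kim~\cite{Kim}, so the mod-$3$ portion of \eqref{cong-3-thm3} is essentially settled. The $2$-adic induction scheme mirrors the method the paper uses for Theorem~\ref{thm2}, where the key inputs were the self-similarities $\bt(2n)\equiv\bt(n)\pmod 4$ and $\bt(4n)\equiv\bt(8n)\pmod{32}$; the analogue you would need is a relation of the type $\bt(2^{k}n)\equiv\bt(2^{k+1}n)\pmod{128}$ for some fixed $k$, and you correctly flag this as the missing substance. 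Until such a recurrence (or an equivalent substitute) is actually established, your proposal remains a plausible roadmap rather than a proof---which is entirely consistent with the statement's status as an open conjecture in the paper.
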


As the reader has probably guessed, there is nothing special in taking overcubic partition pairs or triples. We can define $\btt_k(n)$ to be the number of overcubic partition $k$-tuples, with the following generating function
\begin{equation}\label{eq-gf-2}
    \sum_{n\geq 0}\btt_k(n)q^n=\frac{f_4^k}{f_1^{2k}f_2^k}.
\end{equation}
Then $\btt_1(n)=\overline{a}(n), \btt_2(n)=\btt(n)$ and $\btt_3(n)=\bt(n)$. Similar to the $\bt(n)$ function, there seems to be many congruences that the $\btt_k(n)$ function satisfies for powers of $2$. We just state a few of them here, in the results below.

We begin with the following easy to prove result.
\begin{theorem}\label{thm:9}
    For all $n\geq 0$ and $k\geq 1$ with $n, k \in \mathbb{Z}$ we have
    \[
    \btt_{2k+1}(n)\equiv \overline{a}(n)\pmod4.
    \]
\end{theorem}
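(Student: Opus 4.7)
The plan is to compare generating functions and reduce the claim to a single ratio of eta-quotients being congruent to $1$ modulo $4$. From \eqref{eq-gf-2} and \eqref{gf-an},
\[
\sum_{n\geq 0}\btt_{2k+1}(n)q^n \;=\; \frac{f_4^{2k+1}}{f_1^{4k+2}f_2^{2k+1}} \qquad\text{and}\qquad \sum_{n\geq 0}\overline{a}(n)q^n \;=\; \frac{f_4}{f_1^2 f_2}.
\]
Since each denominator is an invertible element of $\mathbb{Z}[[q]]$ (constant term $1$), after multiplying through by $f_1^2 f_2/f_4$ the theorem is equivalent to showing
\[
\frac{f_4^{2k}}{f_1^{4k}f_2^{2k}} \;\equiv\; 1 \pmod 4, \qquad\text{i.e.,}\qquad f_4^{2k} \;\equiv\; f_1^{4k} f_2^{2k} \pmod 4.
\]

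The key step is to chain two instances of the congruence $f_k^{p^l}\equiv f_{pk}^{p^{l-1}}\pmod{p^l}$ with $p=2$ and $l=2$, which give
\[
f_1^{4} \equiv f_2^{2} \pmod 4 \qquad\text{and}\qquad f_2^{4} \equiv f_4^{2} \pmod 4.
\]
If $A \equiv B \pmod 4$ in $\mathbb{Z}[[q]]$ then $A^k \equiv B^k \pmod 4$ (since $A^k-B^k=(A-B)(A^{k-1}+\cdots+B^{k-1})$), so raising each of the above to the $k$-th power yields
\[
f_1^{4k} \equiv f_2^{2k} \pmod 4 \qquad\text{and}\qquad f_2^{4k} \equiv f_4^{2k} \pmod 4.
\]
Multiplying the first congruence by $f_2^{2k}$ and then substituting via the second,
\[
f_1^{4k} f_2^{2k} \;\equiv\; f_2^{2k}\cdot f_2^{2k} \;=\; f_2^{4k} \;\equiv\; f_4^{2k} \pmod 4,
\]
which is exactly what was needed.

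There is no real obstacle here; the argument is essentially a two-step application of the stated Frobenius-type identity for Dedekind eta-quotients modulo prime powers. The only point that requires a moment's attention is that the cancellation happens cleanly because $\btt_{2k+1}$ has an \emph{odd} number of colour copies, so that after cancelling one copy of $\frac{f_4}{f_1^2f_2}$ the remaining factor has exponents that are all divisible by $2k$, which is precisely the range in which the mod-$4$ congruences above operate.
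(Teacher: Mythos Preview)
Your proof is correct and follows essentially the same approach as the paper: factor the generating function as $\dfrac{f_4}{f_1^2 f_2}\cdot\dfrac{f_4^{2k}}{f_1^{4k}f_2^{2k}}$ and then use the congruence $f_k^{p^l}\equiv f_{pk}^{p^{l-1}}\pmod{p^l}$ with $p=l=2$ to reduce the second factor to $1$ modulo $4$. The paper simply records the one-line chain of equalities without spelling out the intermediate $f_1^4\equiv f_2^2$ and $f_2^4\equiv f_4^2$ steps, whereas you make these explicit; the content is the same.
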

\begin{proof}
    We have
 \begin{equation}\label{cong2k+1}
        \sum_{n\geq 0}\btt_{2k+1}(n)q^n
        =\frac{f_{4}^{2k+1}}{f_{1}^{2(2k+1)}f_{2}^{2k+1}} = \frac{f_{4}^{2k}f_{4}}{f_{1}^{4k}f_{2}^{2k}f_{1}^2f_{2}} \equiv \frac{f_{4}}{f_{1}^{2}f_{2}} \pmod4.
    \end{equation}
This completes the proof, via \eqref{gf-an}.
\end{proof}
\noindent Theorem \ref{thm:9} can be used to prove an infinite family of Ramanujan-like congruences modulo $4$ for the functions $\btt_{2k+1}$. We do not explore this aspect in this paper.

Our next theorem gives a general modulo $4$ congruence for the overcubic partition $k$-tuples function.
\begin{theorem} 
\label{Ram_congs_mod4}
For all $n\geq 0$, $k\geq 0$ with $n, k \in \mathbb{Z}$ and $p\geq 3$ prime, and all quadratic nonresidues $r$ modulo $p$ with $1\leq r\leq p-1$ we have
    $$
\btt_{2k+1}(2pn+R) \equiv 0 \pmod 4 ,
    $$
    where 
    $$
    R = \begin{cases}
			r, & \text{if $r$ is odd,}\\
            p+r, & \text{if $r$ is even.}
		 \end{cases}
   $$
\end{theorem}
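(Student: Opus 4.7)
I will reduce to the case $k=0$ via Theorem \ref{thm:9} and then pin down $\overline{a}(N) \pmod 4$ for odd $N$ using a $2$-dissection of the generating function \eqref{gf-an}.

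For $k \ge 1$, Theorem \ref{thm:9} gives $\btt_{2k+1}(n) \equiv \overline{a}(n) \pmod 4$; the case $k=0$ is immediate from $\btt_1 = \overline{a}$. So it suffices to prove $\overline{a}(2pn+R) \equiv 0 \pmod 4$. Since $p$ is odd, the case distinction in the definition of $R$ forces $R$ to be odd, so $N := 2pn+R$ is odd and $N \equiv r \pmod p$.

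To analyse the odd-indexed coefficients of $\overline{a}$, I would invoke the classical $2$-dissection
\[
\frac{1}{f_1^2} \;=\; \frac{f_8^5}{f_2^5 f_{16}^2} + 2q\,\frac{f_4^2 f_{16}^2}{f_2^5 f_8}
\]
(see e.g.\ Hirschhorn's book). Multiplying by $f_4/f_2$ and isolating the odd part of \eqref{gf-an} yields
\[
\sum_{n \ge 0} \overline{a}(2n+1)\,q^n \;=\; 2\cdot \frac{f_2^3 f_8^2}{f_1^6 f_4}.
\]
Because of the leading factor $2$, I only need the right-hand side modulo $2$. Using $f_1^6 \equiv f_2^3$ and $f_8^2 \equiv f_{16} \pmod 2$, the $f_2^3$ cancels and the expression collapses to $f_{16}/f_4 \pmod 2$. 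Viewing $f_{16}/f_4$ as a series in $Q = q^4$ and applying $f_4 \equiv f_2^2 \pmod 2$ once more identifies it with $\psi(Q) = \sum_{t\ge 0} Q^{t(t+1)/2}$ modulo $2$. Substituting $Q = q^4$ back gives
\[
\sum_{n \ge 0} \overline{a}(2n+1)\,q^n \;\equiv\; 2\sum_{t \ge 0} q^{2t(t+1)} \pmod 4,
\]
so for odd $N = 2n+1$, $\overline{a}(N) \not\equiv 0 \pmod 4$ forces $N = 4t(t+1)+1 = (2t+1)^2$; that is, $N$ is an odd perfect square.

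It then remains to rule out that $N = 2pn+R$ can be an odd square. If $N = s^2$ with $s$ odd, then $N \bmod p$ is either $0$ or a nonzero quadratic residue mod $p$; but $N \equiv R \equiv r \pmod p$, and $r \in \{1,\dots,p-1\}$ is a nonzero quadratic nonresidue by hypothesis, a contradiction. Hence $N$ is not an odd square, so $\overline{a}(N) \equiv 0 \pmod 4$, completing the proof. The only mildly delicate step is the $2$-dissection identity for $1/f_1^2$, which is classical; the rest is just bookkeeping with powers modulo $2$ and a one-line quadratic-residue argument.
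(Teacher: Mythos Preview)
Your proof is correct and follows the same overall route as the paper: reduce to $\overline{a}$ via Theorem~\ref{thm:9}, observe that $N=2pn+R$ is odd with $N\equiv r\pmod p$, and then rule out $N$ being a square by the quadratic-nonresidue hypothesis. The only difference is in one step: the paper simply cites Sellers' characterization \cite[Theorem~2.5]{overcubicsellers} that $\overline{a}(n)\equiv 0\pmod 4$ exactly when $n$ is neither a square nor twice a square, whereas you derive the special case you need (odd $N$ with $\overline{a}(N)\not\equiv 0\pmod 4$ forces $N$ to be an odd square) directly from the $2$-dissection. That dissection is in fact already recorded in the paper as \eqref{dis1byf1^2}, so your argument is fully self-contained within the paper's toolkit; the trade-off is a few extra lines in exchange for not invoking an external result.
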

\noindent Again the proof is not difficult, so we complete it here.
\begin{proof}[Proof of Theorem \ref{Ram_congs_mod4}]
From \cite[Theorem 2.5]{overcubicsellers}, we know that, for all $n\geq 1,$ $\overline{a}(n) \equiv 0 \pmod{4}$ if and only if $n$ is neither a square nor twice a square. So it is enough for us to show that $2pn+R$ as defined above is never a square and never twice a square, thanks to Theorem \ref{thm:9}. Clearly, $2pn+R$ is always odd by definition, so it cannot be twice a square. Next, from the definition of $R$ we see that $2pn+R \equiv r \pmod{p}$. Since $r$ is defined to be a quadratic nonresidue modulo $p$, we know that $r$ cannot be congruent to a square modulo $p.$  Thus, $2pn+R$ cannot equal a square.  This concludes the proof.
\end{proof}

Our final congruence result is the following result.
\begin{theorem}\label{thm5}
    For all $n\geq 0$ and $k\geq 0$, we have
    \begin{align}
        \btt_{2k+1}(8n+1)&\equiv 0\pmod2,\\
    \btt_{2k+1}(8n+2)&\equiv 0\pmod2,\\
    \btt_{2k+1}(8n+3)&\equiv 0\pmod4,\\
    \btt_{2k+1}(8n+4)&\equiv 0\pmod2,\\
    \btt_{2k+1}(8n+5)&\equiv 0\pmod{8},\\
    \btt_{2k+1}(8n+6)&\equiv 0\pmod4,\\
    \btt_{2k+1}(8n+7)&\equiv 0\pmod{16}.
    \end{align}
\end{theorem}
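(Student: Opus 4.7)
My plan rests on the clean generating-function identity
\[
\sum_{n \geq 0}\btt_{2k+1}(n)\,q^n \;=\; \left(\frac{f_4}{f_1^2 f_2}\right)^{2k+1} \;=\; A(q)^{2k+1}, \qquad A(q) := \sum_{n \geq 0}\overline{a}(n)\,q^n.
\]
Since $A \equiv 1 \pmod 2$ (by the same $f_1^2 \equiv f_2$, $f_2^2 \equiv f_4 \pmod 2$ observation used implicitly in the proof of Theorem \ref{thm:9}), I can write $A = 1 + 2B$ with $B \in \mathbb{Z}[[q]]$, so $A^2 = 1 + 4(B+B^2)$ and
\[
(A^2)^k \;=\; \bigl(1 + 4(B+B^2)\bigr)^k \;\equiv\; 1 + 4k(B+B^2) \pmod{16}.
\]
Multiplying by $A$ and using $4A(B+B^2) = A(A^2 - 1) = A^3 - A$ gives $A^{2k+1} \equiv A + k(A^3 - A) = (1-k)A + kA^3 \pmod{16}$. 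Extracting the coefficient of $q^n$ then produces the master congruence
\[
\btt_{2k+1}(n) \;\equiv\; (1-k)\,\overline{a}(n) + k\,\bt(n) \pmod{16}, \qquad k \geq 0,
\]
which will be the engine of the whole proof.

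From this identity it is immediate that whenever a modulus $M$ dividing $16$ divides both $\overline{a}(n)$ and $\bt(n)$, it divides $\btt_{2k+1}(n)$ for every $k$. So Theorem \ref{thm5} reduces to checking, for each residue $r \in \{1,\dots,7\}$ modulo $8$, that the claimed $M_r$ divides both $\overline{a}(8n+r)$ and $\bt(8n+r)$. On the $\bt$-side every such congruence is already at hand: the mod-$2$ statements come from \eqref{pmod2}; the mod-$4$ statements at $8n+3$ and $8n+6$ are precisely \eqref{cong-0} and \eqref{cong-01}; and the mod-$8$ and mod-$16$ statements at $8n+5$ and $8n+7$ are weaker than \eqref{cong-1} and \eqref{cong-2}. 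On the $\overline{a}$-side, the mod-$2$ statements are immediate from $A \equiv 1 \pmod 2$, and the mod-$4$ cases at $8n+3$ and $8n+6$ follow exactly as in the proof of Theorem \ref{Ram_congs_mod4}: these residues are neither squares nor twice squares modulo $8$, so \cite[Theorem 2.5]{overcubicsellers} applies.

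The only genuinely new input is
\[
\overline{a}(8n+5) \equiv 0 \pmod 8 \quad \text{and} \quad \overline{a}(8n+7) \equiv 0 \pmod{16}.
\]
For these I plan to use the theta representation $A(q) = 1/\bigl(\phi(-q)\phi(-q^2)\bigr)$. Writing $\phi(-q)\phi(-q^2) = 1 + 2\Theta$ with $\Theta = \theta_1 + \theta_2 + 2\theta_1\theta_2$ and $\theta_j(q) = \sum_{m\geq 1}(-1)^m q^{jm^2}$, the geometric-series expansion yields $A \equiv 1 - 2\Theta + 4\Theta^2 - 8\Theta^3 \pmod{16}$, and extracting the coefficient of $q^{8n+5}$ (resp.\ $q^{8n+7}$) reduces the problem to parity statements about the number of representations of $8n+5$ (resp.\ $8n+7$) by quadratic forms such as $m^2$, $i^2+j^2$, $i^2+2j^2$, and $a^2+b^2+2c^2$. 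The residues $5$ and $7$ modulo $8$ are not represented by the simplest of these forms, and the representations by the forms that do contribute come in symmetric ordered pairs $(i,j)\leftrightarrow(j,i)$, so the relevant counts are even; Gauss's three-square theorem handles the vanishing at the $\Theta^3$-level for $8n+7$. The principal obstacle is keeping track of exactly which monomials $\theta_1^{a}\theta_2^{b}$ contribute at each residue and matching signs carefully; the work is elementary but fiddly, and mirrors the standard $2$-adic analysis of $\overline{a}(n)$ in the literature.
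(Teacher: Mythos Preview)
Your proposal is correct and takes a genuinely different route from the paper. The paper's sketch works uniformly in $t=2k+1$ via the product representation
\[
\sum_{n\geq 0}\btt_t(n)q^n=\Bigl(\varphi(q)\prod_{i\geq 1}\varphi(q^{2^i})^{3\cdot 2^{i-1}}\Bigr)^{t},
\]
performs an $8$-dissection of $\bigl(\varphi(q)\varphi(q^2)^3\varphi(q^4)^6\bigr)^t$, and proves the required divisibility of the coefficients $a_{t,j}$ by induction on $t$ following the pattern in \cite{sSellers,SSS}. Your argument instead produces the master congruence
\[
\btt_{2k+1}(n)\equiv (1-k)\,\overline{a}(n)+k\,\bt(n)\pmod{16},
\]
which collapses the whole family to the two base cases $k=0$ and $k=1$; these you then harvest from Theorem~\ref{thm1} for $\bt$ and from a direct $2$-adic expansion of $1/\bigl(\varphi(-q)\varphi(-q^2)\bigr)$ for $\overline{a}$.

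Each approach has its own trade-off. The paper's induction is self-contained and does not rely on Theorem~\ref{thm1} (which was proved via Radu's algorithm), so it stays within purely elementary territory; it also treats all odd $t$ on an equal footing without singling out $\overline{a}$ and $\bt$. Your master congruence is more conceptual: it explains \emph{why} the same moduli appear for every $k$ and ties the theorem directly to the $k=0,1$ cases, at the cost of importing the computer-verified congruences \eqref{cong-0}--\eqref{cong-2} and of carrying out the slightly fiddly residue analysis for $\overline{a}(8n+5)\pmod 8$ and $\overline{a}(8n+7)\pmod{16}$. Your outline of that last piece is sound (the quadratic-form residues modulo $8$ kill most terms, the $(i,j)\leftrightarrow(j,i)$ pairing handles $\theta_1^2$ and $\theta_1^2\theta_2$, and the three-square theorem disposes of $\theta_1^3$ at $8n+7$), though note that Gauss's theorem alone does not finish the $\Theta^3$ step for $8n+7$: you still need the pairing argument for the mixed term $\theta_1^2\theta_2$, since $i^2+j^2+2\ell^2$ \emph{can} be $\equiv 7\pmod 8$.
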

\noindent We give a sketch proof of this theorem in the paper in Section \ref{sec:sketchpf}. In fact, it is easy to verify the modulo $2$ congruences, like we did with \eqref{pmod2}.

Along with the study of Ramanujan-type congruences, the study of distribution of the coefficients of a formal power series modulo $M$ is also an interesting problem to explore. Given an integral power series $A(q) := \displaystyle\sum_{n=0}^{\infty}a(n)q^n $ and $0 \leq r \leq M$, the arithmetic density $\delta_r(A,M;X)$ is defined as
\begin{equation*}
       \delta_r(A,M;X) = \frac{\#\{ n \leq X : a(n) \equiv r \pmod{M} \}}{X}.
  \end{equation*}
An integral power series $A$ is called \textit{lacunary modulo $M$} if
\begin{equation*}
   \lim_{X \to \infty} \delta_0(A,M;X)=1,
\end{equation*}
which means that almost all the coefficients of $A$ are divisible by $M$. It turns out that the partition function $\bt(n)$ is almost always divisible by $2^k$ for $k\geq1$. Specifically, we prove the following result in Section \ref{sec:lacunary}.
\begin{theorem}\label{thm:lacunary}
	For $\ell\geq1$, let $G(q)=\sum_{n=0}^{\infty}\overline{b}_{\ell}(n)q^n$. Then for every positive integer $k$, 
 \begin{equation}\label{lacunary}
     \lim_{X\to\infty} \delta_{0}(G,2^k;X)  = 1.
 \end{equation}
	\end{theorem}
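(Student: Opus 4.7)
The plan is to invoke Serre's density theorem on integer-coefficient holomorphic modular forms of positive weight. For each $k\ge 1$, I would exhibit a holomorphic modular form $F_{k,\ell}(z)$ of positive integer weight on some $\Gamma_0(N_k)$, with integer Fourier coefficients, satisfying
\[
F_{k,\ell}(z)\equiv q^{s_k}\,G(q)\pmod{2^k}
\]
as formal power series for some non-negative integer shift $s_k$. Serre's theorem then implies that a density-$1$ subset of the Fourier coefficients of $F_{k,\ell}(z)$ is divisible by $2^k$, which, up to the harmless $q$-shift, transfers to $\overline{b}_\ell(n)$ and yields \eqref{lacunary}.

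The construction rests on the familiar congruence $f_d^{\,2^a}\equiv f_{2d}^{\,2^{a-1}}\pmod{2^a}$. Starting from $G(q)=f_4^\ell/(f_1^{2\ell}f_2^\ell)$, I would multiply by finitely many auxiliary factors of the shape $\bigl(f_d^{\,2^a}/f_{2d}^{\,2^{a-1}}\bigr)^{t_d}$, each congruent to $1$ modulo $2^a$ whenever $a\ge k$. The resulting formal product $\prod_{\delta\mid N_k}f_\delta^{r_\delta}$ is then congruent to $G(q)$ modulo $2^k$ and lifts to the eta quotient
\[
F_{k,\ell}(z)=\prod_{\delta\mid N_k}\eta(\delta z)^{r_\delta}
\]
of weight $\tfrac{1}{2}\sum_\delta r_\delta$. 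This weight scales like $2^{a-2}$, so it is a positive integer once $a$ is taken sufficiently large relative to $\ell$.

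The main obstacle is to verify that $F_{k,\ell}$ is genuinely a \emph{holomorphic} modular form on $\Gamma_0(N_k)$. By Ligozat's criterion this requires the two congruences
\[
\sum_{\delta\mid N_k}\delta\,r_\delta\equiv 0\pmod{24}\qquad\text{and}\qquad \sum_{\delta\mid N_k}\tfrac{N_k}{\delta}\,r_\delta\equiv 0\pmod{24},
\]
together with non-negativity of the order of vanishing of $F_{k,\ell}$ at every cusp of $\Gamma_0(N_k)$. The first congruence is automatic for our construction, while the second together with the cusp conditions can be engineered by enlarging the level $N_k=2^m$ (which enriches the pool of available auxiliary multipliers $t_d$) and by taking $a$ far larger than $k$, so that the dominant $2^a$-contribution to each cusp order is positive. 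Although finite, this bookkeeping is the technical heart of the argument and may require a mild case split on the $2$-adic valuation of $\ell$.

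With such an $F_{k,\ell}$ in hand, the proof concludes by Serre's theorem (see, for instance, Theorem~2.65 of Ono's \emph{The Web of Modularity}): a density-$1$ set of Fourier coefficients of $F_{k,\ell}$ is divisible by $2^k$, and the congruence $F_{k,\ell}(z)\equiv q^{s_k}\,G(q)\pmod{2^k}$ transfers this density-$1$ statement to $\overline{b}_\ell(n)$, proving \eqref{lacunary} for every positive integer $k$.
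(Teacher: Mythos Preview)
Your plan is the correct underlying strategy, but the paper bypasses all of the bookkeeping you flag as ``the technical heart'' by invoking a ready-made criterion of Cotron, Michaelsen, Stamm, and Zhu (\emph{Ramanujan J.}~53 (2020), Theorem~1.1). That theorem says precisely that an eta quotient $G(\tau)$ as in \eqref{G(tau)} of integer weight is lacunary modulo every power of a prime $p$ provided $p^a\mid \mathcal{D}_G:=\gcd(\delta_1,\ldots,\delta_u)$ and
\[
p^a\;\ge\;\sqrt{\dfrac{\sum_i \gamma_i s_i}{\sum_i r_i/\delta_i}}.
\]
For $G(q)=f_4^{\ell}/(f_1^{2\ell}f_2^{\ell})$ one has $\mathcal{D}_G=4$, integer weight $-\ell$, and the right-hand side equals $\sqrt{(2\ell+2\ell)/(\ell/4)}=4$, so $p=2$, $a=2$ works and the theorem applies in one line. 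The Cotron \emph{et al.}\ result is itself proved exactly via the Serre-plus-auxiliary-eta-factor scheme you outline, so what you are proposing is essentially a from-scratch reproof of their criterion in this special case.

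As written, your proposal is a sketch rather than a proof: the crucial step---choosing the multipliers $t_d$ and the level $N_k=2^m$ so that Ligozat's second congruence and \emph{all} cusp orders come out non-negative---is asserted (``can be engineered'') but not carried out. This is doable, but it is not automatic; for instance, your claim that the first Ligozat congruence is automatic is correct (both the base quotient and each auxiliary factor contribute $0$ to $\sum_\delta \delta r_\delta$), but the second congruence and the cusp inequalities genuinely interact with the $2$-adic valuation of $\ell$ and require an explicit choice. If you want a self-contained argument, you should either exhibit the exponents concretely or simply cite Cotron \emph{et al.}, which is what the paper does.
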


This paper is organized as follows: Sections \ref{sec:proof-thm:cong} -- \ref{sec:lacunary} contains the proofs of our results, and we end our paper with some concluding remarks in Section \ref{sec:conc}.

\section{Proofs of Theorems \ref{thm1} and \ref{thm3}}\label{sec:proof-thm:cong}

In this section we prove Theorems \ref{thm1} and \ref{thm3} using an algorithmic approach. More specifically, we use Smoot's \cite{Smoot} implementation of Radu's algorithm \cite{Radu2,Radu}, which can be used to prove Ramanujan type congruences of the form stated in Section \ref{sec:intro}. The algorithm takes as an input the generating function
\[
\sum_{n= 0}^\infty a_r(n)q^n=\prod_{\delta|M}\prod_{n= 1}^\infty (1-q^{\delta n})^{r_\delta},
\]
and positive integers $m$ and $N$, with $M$ another positive integer and $(r_\delta)_{\delta|M}$ is a sequence indexed by the positive divisors $\delta$ of $M$. With this input, Radu's algorithm tries to produce a set $P_{m,j}(j)\subseteq \{0,1,\ldots, m-1\}$ which contains $j$ and is uniquely defined by $m, (r_\delta)_{\delta|M}$ and $j$. Then, it decides if there exists a sequence $(s_\delta)_{\delta |N}$ such that
\[
q^\alpha \prod_{\delta|M}\prod_{n=1}^\infty (1-q^{\delta n})^{s_\delta} \cdot \prod_{j^\prime \in P_{m,j}(j)}\sum_{n=0}^\infty a(mn+j^\prime)q^n,
\]
is a modular function with certain restrictions on its behaviour on the boundary of $\mathbb{H}$.

Smoot \cite{Smoot} implemented this algorithm in Mathematica and we use his \texttt{RaduRK} package, which requires the software package \texttt{4ti2}. Documentation on how to intall and use these packages are available from Smoot \cite{Smoot}. We use this implemented \texttt{RaduRK} algorithm to prove Theorem \ref{thm1} in the next section.

It is natural to guess that $N=m$ (which corresponds to the congruence subgroup $\Gamma_0(N)$), but this is not always the case, although they are usually closely related to one another. The determination of the correct value of $N$ is an important problem for the usage of \texttt{RaduRK} and it depends on the $\Delta^\ast$ criterion described in the previous subsection. It is easy to check the minimum $N$ which satisfies this criterion by running \texttt{minN[M, r, m, j]}. The generating function of $\bt(n)$ given in \eqref{eq-gf} can be described by setting $M=4$ and $r=\{-6,-3,3\}$.

\begin{proof}[Proof of Theorem \ref{thm1}]
 Since the proof of all congruences listed in Theorem \ref{thm1} are similar, we only prove \eqref{cong-2} here. The rest of the output can be obtained by visiting \url{https://manjilsaikia.in/publ/mathematica/Overcubic_Triples.nb}. We use \eqref{eq-gf} and calculate \texttt{minN[4,\{-6,-3,3\},8,7]}, which gives $N=8$, which is easily handled in a modest laptop. Radu's algorithm now gives a straight proof of \eqref{cong-2}. Here we give the output of \texttt{RK}.

\allowdisplaybreaks{
	\begin{doublespace}
		\begin{align*}
		\texttt{In[1] := } & \texttt{RaduRK[8,4,\{-6,-3,3\},8,7]}\\
		& \prod_{\texttt{$\delta$|M}} (\texttt{q}^{\delta };\texttt{q}^{\delta })_{\infty }^{\texttt{r}_{\delta }}  = \sum_{\texttt{n=0}}^{\infty } \texttt{a}(\texttt{n})\,\texttt{q}^\texttt{n}\\
		& \fbox{$\texttt{f}_\texttt{1}(\texttt{q})\cdot \prod\limits_{\texttt{j}'\in \texttt{P}_{\texttt{m,r}}(\texttt{j}) } \sum\limits_{\texttt{n=0}}^\infty \texttt{a}(\texttt{mn}+\texttt{j}')\,\texttt{q}^\texttt{n} = \sum\limits_{\texttt{g}\in \texttt{AB}} \texttt{g}\cdot \texttt{p}_\texttt{g}(\texttt{t}) $} \\
		& \texttt{Modular Curve: }\texttt{X}_\texttt{0}(\texttt{N}) \\
		\texttt{Out[2] = }\\
  &\begin{array}{c|c}
 \text{N:} & 8 \\
\hline
 \text{$\{$M,(}r_{\delta })_{\delta |M}\text{$\}$:} & \{4,\{-6,-3,3\}\} \\
\hline
 \text{m:} & 8 \\
\hline
 P_{m,r}\text{(j):} & \{7\} \\
\hline
 f_1\text{(q):} & \dfrac{(q;q)_{\infty }^{69} \left(q^4;q^4\right)_{\infty }^{30}}{q^{15}
   \left(q^2;q^2\right)_{\infty }^{29} \left(q^8;q^8\right)_{\infty }^{64}} \\
\hline
 \text{t:} & \dfrac{\left(q^4;q^4\right)_{\infty }^{12}}{q \left(q^2;q^2\right)_{\infty
   }^4 \left(q^8;q^8\right)_{\infty }^8} \\
\hline
 \text{AB:} & \{1\} \\
\hline
 \left\{p_g\text{(t): g$\in $AB$\}$}\right. & \left\{9792 t^{15}+6606336 t^{14}+905825280
   t^{13}\right.\\ &+46058225664 t^{12}+1124900028416 t^{11}\\&+15177685794816 t^{10}+122507156520960
   t^9\\&+616578030764032 t^8+1960114504335360 t^7\\&+3885487563472896 t^6+4607590516391936
   t^5\\&+3018471877115904 t^4+949826648801280 t^3\\&\left.+110835926040576 t^2+2628519985152
   t\right\} \\
\hline
 \text{Common Factor:} & 64 \\
\end{array}
		\end{align*}
	\end{doublespace}  }

The interpretation of this output is as follows.

The first entry in the procedure call \texttt{RK[8, 4, \{-6, -3, 3\}, 8, 7]} corresponds to specifying $N=8$, which fixes the space of modular functions
\[
M(\Gamma_0(N)):=\text{the algebra of modular functions for $\Gamma_0(N)$}.
\]

 The second and third entry of the procedure call \texttt{RK[8, 4, \{-6, -3, 3\}, 8, 7]} gives the assignment $\{M, (r_\delta)_{\delta|M}\}=\{4, (-6, -3, 3)\}$, which corresponds to specifying $(r_\delta)_{\delta|M}=(r_1,r_2,r_4)=(-6,-3,3)$, so that
 \[
\sum_{n\geq 0}\bt(n)q^n=\prod_{\delta|M}(q^\delta;q^\delta)^{r_\delta}_\infty = \frac{f_4^3}{f_1^6f_2^3}.
 \]

 The last two entries of the procedure call \texttt{RK[8, 4, \{-6, -3, 3\}, 8, 7]} corresponds to the assignment $m=8$ and $j=7$, which means that we want the generating function
 \[
\sum_{n\geq 0}\bt(n)(mn+j)q^n=\sum_{n\geq 0}\bt(n)(8n+7)q^n.
 \]
 So, $P_{m,r}(j)=P_{8,r}(7)$ with $r=(-6,-3,3)$.

The output $P_{m,r}(j):=P_{8,(-6,3,3)}(7)=\{7\}$ means that there exists an infinite product
\[
f_1(q)=\dfrac{(q;q)_{\infty }^{69} \left(q^4;q^4\right)_{\infty }^{30}}{q^{15}
   \left(q^2;q^2\right)_{\infty }^{29} \left(q^8;q^8\right)_{\infty }^{64}},
\]
such that
\[
f_1(q)\sum_{n\geq 0}\bt(n)(8n+7)q^n\in M(\Gamma_0(8)).
\]

Finally, the output
\[
t=\dfrac{\left(q^4;q^4\right)_{\infty }^{12}}{q \left(q^2;q^2\right)_{\infty
   }^4 \left(q^8;q^8\right)_{\infty }^8}, \quad AB=\{1\}, \quad \text{and}\quad \{p_g\text{(t): g$\in AB$\}},
\]
presents a solution to the question of finding a modular function $t\in M(\Gamma_0(8))$ and polynomials $p_g(t)$ such that
\[
f_1(q)\sum_{n\geq 0}\bt(8n+7)q^n =\sum_{g\in AB}p_g(t)\cdot g
\]
In this specific case, we see that the singleton entry in the set $\{p_g\text{(t): g$\in AB$\}}$ has the common factor $64$, thus proving equation \eqref{cong-2}.
\end{proof}

\begin{remark}
    The interested reader can refer to \cite{Saikia} or \cite{AndrewsPaule2} for some more recent applications of the method.
\end{remark}

\begin{proof}[Proof of Theorem \ref{thm3}]
    Since the proof of Theorem \ref{thm3} is similar to the proof of Theorem \ref{thm1}, we just refer the reader to the output file available at \url{https://manjilsaikia.in/publ/mathematica/Overcubic_Triples_72.nb}
\end{proof}

 \section{Proof of Theorem \ref{thm2}}\label{sec:proof-thm2}

Some known 2-dissections (see for example \cite[Lemma 2]{matching}) are stated in the following lemma, which will be used subsequently.
\begin{lemma}\label{lem-diss}
We have
\begin{align}
f_{1}^2 &= \frac{f_{2}f_{8}^5}{f_{4}^2f_{16}^2} - 2 q \frac{f_{2} f_{16}^2}{f_{8}},\label{disf1^2}\\
\frac{1}{f_{1}^2} &= \frac{f_{8}^5}{f_{2}^5 f_{16}^2} + 2 q \frac{f_{4}^2 f_{16}^2 }{f_{2}^5 f_{8}},\label{dis1byf1^2}\\
f_{1}^4 &= \frac{f_{4}^{10}}{f_{2}^2f_{8}^4} - 4 q \frac{f_{2}^2 f_{8}^4}{f_{4}^2},\label{disf1^4}\\
\frac{1}{f_{1}^4} &= \frac{f_{4}^{14}}{f_{2}^{14} f_{8}^4} + 4 q \frac{f_{4}^2 f_{8}^4}{f_{2}^{10}}.\label{dis1byf1^4}
\end{align}
\end{lemma}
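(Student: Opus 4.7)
The four identities in Lemma \ref{lem-diss} are classical 2-dissections, and my plan is to derive all of them from the 2-dissection of Ramanujan's theta function $\varphi(-q)=f_1^2/f_2$, together with a small bank of standard relations among theta functions.

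First I would prove \eqref{disf1^2} by splitting the series $\varphi(-q)=\sum_{n\in\mathbb{Z}}(-1)^n q^{n^2}$ according to the parity of $n$: the even-$n$ piece contributes $\varphi(q^4)$ and the odd-$n$ piece (via $(2m+1)^2=8\cdot m(m+1)/2+1$) contributes $-2q\psi(q^8)$, giving $\varphi(-q)=\varphi(q^4)-2q\psi(q^8)$. Substituting the standard evaluations $\varphi(q)=f_2^5/(f_1^2f_4^2)$ and $\psi(q)=f_2^2/f_1$ and multiplying through by $f_2$ recovers \eqref{disf1^2}. For \eqref{dis1byf1^2} I would then use the conjugate dissection $\varphi(q)=\varphi(q^4)+2q\psi(q^8)$ together with the classical product formula $\varphi(q)\varphi(-q)=\varphi(-q^2)^2=f_2^4/f_4^2$, which gives
\[
\frac{1}{\varphi(-q)} \;=\; \frac{\varphi(q)}{\varphi(q)\varphi(-q)} \;=\; \frac{f_4^2}{f_2^4}\bigl(\varphi(q^4)+2q\psi(q^8)\bigr);
\]
dividing by $f_2$ and simplifying eta-quotients reproduces \eqref{dis1byf1^2}.

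Next I would tackle \eqref{disf1^4} by squaring the dissection of $\varphi(-q)$:
\[
\varphi(-q)^2 \;=\; \bigl(\varphi(q^4)^2+4q^2\psi(q^8)^2\bigr) \;-\; 4q\,\varphi(q^4)\psi(q^8).
\]
The classical identity $\varphi(q)^2+\varphi(-q)^2=2\varphi(q^2)^2$, combined with the two dissections of $\varphi(\pm q)$, collapses the even part to $\varphi(q^4)^2+4q^2\psi(q^8)^2=\varphi(q^2)^2$, while a routine eta-quotient computation gives the cross-term $\varphi(q^4)\psi(q^8)=f_8^4/f_4^2$. Multiplying by $f_2^2$ and using $\varphi(q^2)=f_4^5/(f_2^2 f_8^2)$ then produces \eqref{disf1^4}. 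Finally, for \eqref{dis1byf1^4}, I would apply the same conjugation trick to $1/\varphi(-q)^2$, using $\varphi(-q^2)^4=f_2^8/f_4^4$ in the denominator.

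The main obstacle is bookkeeping rather than any deep obstruction: the identities follow routinely from Jacobi's triple product and a short list of classical theta relations, but one needs to track eta-quotient simplifications carefully through the conjugation and squaring steps. A fully mechanical alternative, which I would fall back on for verification, is to view each identity as an equality of modular functions on $\Gamma_0(N)$ with $N\in\{16,32\}$ and check it by comparing $q$-expansions up to Sturm's bound.
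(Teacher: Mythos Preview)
Your derivation is correct and follows the standard route through Ramanujan's theta functions $\varphi$ and $\psi$, the parity split of $\varphi(-q)$, the conjugation trick $\varphi(q)\varphi(-q)=\varphi(-q^2)^2$, and squaring. There is nothing to compare against: the paper does not prove this lemma at all, but simply records the four dissections as known, citing \cite[Lemma~2]{matching}. So your proposal supplies strictly more than the paper does, and your fallback (verifying each identity as an equality of modular functions on $\Gamma_0(16)$ or $\Gamma_0(32)$ via a Sturm-bound check) is also perfectly sound.
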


\begin{proof}[Proof of \eqref{cong-1-thm2}]
 Nayaka, Dharmendra, and Kumar had found \cite[Eq. (45)]{NayakaDharmendraKumar}
 \begin{equation}\label{eq:1}
     \sum_{n\geq 0}\bt(n)q^n=\frac{f_4^3f_8^{15}}{f_2^{18}f_{16}^6}+6q\frac{f_4^5f_8^9}{f_2^{18}f_{16}^2}+12q^2\frac{f_4^7f_8^3f_{16}^2}{f_2^{18}}+8q^3\frac{f_4^9f_{16}^6}{f_2^{18}f_8^3}.
 \end{equation}
From \eqref{eq:1} we have
\begin{equation}\label{eq:2}
    \sum_{n\geq 0}\bt(2n)q^n\equiv \frac{f_2^3f_4^{15}}{f_1^{18}f_8^6} \pmod 4.
\end{equation}
Using \eqref{eq-gf} and \eqref{eq:2} we obtain, for all $n\geq 0$
\begin{equation}\label{eq3}
    \bt(2n)\equiv \bt(n) \pmod 4.
\end{equation}
Combining \eqref{eq3} with \eqref{cong-0} we obtain \eqref{cong-1-thm2}.
\end{proof}

\begin{remark}
    We note here, it can be proved that $\bt(n)\equiv\overline{a}(n)\pmod{4}$. With this and a result of Sellers \cite[Corollary 2.6]{overcubicsellers} for $\overline{a}(n)$, we get an alternate proof of \eqref{cong-1-thm2}.
\end{remark}

\begin{proof}[Proof of \eqref{cong-2-thm2}]
We re-write \eqref{eq-gf} as
\begin{align*}
    \sum_{n\geq 0}\bt(n)q^n=\frac{f_{4}^3}{f_2^3}\cdot\frac{1}{(f_1^4)^2}\cdot f_1^2.
\end{align*}
Employing \eqref{disf1^2}, \eqref{dis1byf1^4} and then extracting the even powered terms of $q$, we have
\begin{align*}
    \sum_{n\geq 0}\bt(2n)q^n&=\frac{f_2^{29}}{f_1^{30} f_4^3 f_8^2}+16q\frac{f_4^{13} f_2^5 }{f_1^{22} f_8^2}-16q\frac{f_8^2 f_2^{19}}{f_1^{26} f_4}\\
    &\equiv \frac{f_1^2f_2^{13}}{f_4^3 f_8^2}+16qf_1^{24}-16qf_1^{24} \equiv \frac{f_1^2f_2^{13}}{f_4^3 f_8^2}\pmod{32}.
\end{align*}
Extracting the even powered terms of $q$ by using \eqref{disf1^2}, we have
\begin{equation}
    \sum_{n\geq 0}\bt(4n)q^n\equiv \frac{f_1^{14} f_4^3}{f_2^5 f_8^2} \pmod{32}.\label{bt40mod32}
\end{equation}

Again, re-writing \eqref{eq-gf} as
\begin{align*}
    \sum_{n\geq 0}\bt(n)q^n=\frac{f_{4}^3}{f_2^3}\cdot\frac{1}{f_1^4}\cdot\frac{1}{ f_1^2}.
\end{align*}
Employing \eqref{dis1byf1^2}, \eqref{dis1byf1^4} and then extracting the even powered terms of $q$, we have
\begin{align*}
     \sum_{n\geq 0}\bt(2n)q^n&=\frac{f_4 f_2^{17}}{f_1^{22} f_8^2}+8q\frac{f_4^3 f_8^2 f_2^7}{f_1^{18}} \equiv \frac{f_2 f_4 f_1^{10}}{f_8^2} +8q f_2 f_4 f_1^{18} \pmod{32}\\
     &\equiv \frac{f_2 f_4}{f_8^2}\cdot (f_1^4)^2\cdot f_1^2 + 8 q f_2 f_4\cdot (f_1^{4})^4\cdot f_1^2 \pmod{32}.
\end{align*}
Employing \eqref{disf1^2}, \eqref{disf1^4} and further extracting the even powered terms of $q$, we have
\begin{align*}
     \sum_{n\geq 0}\bt(4n)q^n&\equiv\frac{f_2^{19}}{f_1^2 f_4^5 f_8^2}+16q\frac{f_1^2 f_8^2 f_2^9}{f_4^3}+16q\frac{f_1^6 f_4^{11}}{f_2^5 f_8^2}-16q\frac{f_8^2 f_2^{41}}{f_1^6 f_4^{17}}\\
     &\equiv \frac{f_2^{19}}{f_1^2 f_4^5 f_8^2}+16qf_1^{24}+16qf_1^{24}-16qf_1^{24}\\
     &\equiv \frac{f_2^{19}}{f_1^2 f_4^5 f_8^2}+16qf_{16}f_8 \pmod{32}.
\end{align*}
Finally, employing \eqref{dis1byf1^2} and extracting the terms involving $q^{2n}$, we have
\begin{align}
    \sum_{n\geq 0}\bt(8n)q^n&\equiv\frac{f_1^{14} f_4^3}{f_2^5 f_8^2}\pmod{32}.\label{bt8nmod32}
\end{align}

From \eqref{bt40mod32} and \eqref{bt8nmod32}, we have
\begin{align}
    \bt(4n)\equiv\bt(8n)\pmod{32}.\label{cong4n8n:mod32}
\end{align}

Combining \eqref{cong-1}, \eqref{cong-3}, \eqref{cong-6} and \eqref{cong4n8n:mod32}, we conclude the proof.
\end{proof}

\section{Proof of Theorem \ref{thm:lacunary}}\label{sec:lacunary}

Before going into the proof of Theorem \ref{thm:lacunary}, we recall some fundamental ideas from the theory of modular forms. We recall that the Dedekind's eta-function $\eta(z)$ is defined by
\begin{align*}
	\eta(z):=q^{1/24}(q;q)_{\infty}=q^{1/24}\prod_{n=1}^{\infty}(1-q^n),
\end{align*}
where $q:=e^{2\pi iz}$ and $z\in \mathbb{H}$. A function $f(z)$ is called an $eta$-quotient if it can be expressed as a finite product of
the form $$f(z)=\prod_{\delta\mid N}\eta(\delta z)^{r_\delta},$$ where $N$ is a positive integer and each $r_{\delta}$ is an integer. 

Define
\begin{align}
    G(\tau) :=\frac{\eta(\delta_1\tau)^{r_1}\eta(\gamma_1\tau)^{r_2}\cdots\eta(\gamma_u\tau)^{r_u}}{\eta(\gamma_1\tau)^{s_1}\eta(\gamma_2\tau)^{s_2}\cdots\eta(\gamma_t\tau)^{s_t}},\label{G(tau)}
\end{align}
where $r_i , s_i, \delta_i,$ and $\gamma_i$ are positive integers with $\delta_1,...,\delta_u, \gamma_1,...,\gamma_t$ distinct, $u, t \geq 0$. The weight of $G(\tau)$ is given by
\begin{align*}
    \frac{1}{2}\left(\sum_{i=1}^{u}r_i-\sum_{i=1}^{t}s_i\right).
\end{align*}
Also define $\mathcal{D_G}:=\gcd(\delta_1, \delta_2,\ldots,\delta_u)$.
The following result by Cotron \textit{et. al} \cite{cotronetal} will be useful in proving Theorem \ref{thm:lacunary}.
\begin{theorem}\cite[Theorem 1.1]{cotronetal}\label{thm:cotron}
    Suppose $G(\tau)$ is an eta-quotient of the form \eqref{G(tau)} with integer weight. If $p$ is a prime such that $p^a$ divides $\mathcal{D_G}$ and
    \begin{align}
        p^a\geq\sqrt{\frac{\sum_{i=1}^{t}\gamma_is_i}{\sum_{i=1}^{u}\frac{r_i}{\delta_i}}},
    \end{align}
then $G(\tau)$ is lacunary modulo $p^j$ for any positive integer $j$. Moreover, there exists a positive constant $\alpha$, depending on $p$ and $j$, such that the number of integers $n \leq X$ with $p^j$ not dividing $b(n)$ is $O\left(\dfrac{X}{\log^{\alpha}X}\right)$.
\end{theorem}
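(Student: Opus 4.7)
The plan is to apply Theorem \ref{thm:cotron} of Cotron et al.\ directly to the generating function in \eqref{eq-gf-2}, rewritten as an eta-quotient. Using $f_k = q^{-k/24}\eta(k\tau)$ with $q = e^{2\pi i\tau}$, the prefactors in $f_4^\ell/(f_1^{2\ell}f_2^\ell)$ contribute $q^{-\ell/6}\cdot q^{\ell/12}\cdot q^{\ell/12} = q^0$, so no fractional power of $q$ survives and we obtain the clean identity
\[
G(\tau) \;=\; \frac{\eta(4\tau)^\ell}{\eta(\tau)^{2\ell}\,\eta(2\tau)^\ell},
\]
which fits the template \eqref{G(tau)} with $u=1$, $\delta_1=4$, $r_1=\ell$ in the numerator, and $t=2$, $\gamma_1=1$, $s_1=2\ell$, $\gamma_2=2$, $s_2=\ell$ in the denominator. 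The weight equals $\tfrac{1}{2}(\ell - 2\ell - \ell) = -\ell$, an integer for every $\ell\geq 1$, as the hypothesis of Theorem \ref{thm:cotron} requires.

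Next, I would verify the numerical conditions of Theorem \ref{thm:cotron} for the prime $p=2$ and exponent $a=2$. Since $\mathcal{D}_G = \gcd(\delta_1) = 4$, the requirement $p^a \mid \mathcal{D}_G$ is immediate. For the size condition,
\[
\sqrt{\frac{\sum_{i=1}^{t}\gamma_i s_i}{\sum_{i=1}^{u} r_i/\delta_i}} \;=\; \sqrt{\frac{1\cdot 2\ell + 2\cdot \ell}{\ell/4}} \;=\; \sqrt{16} \;=\; 4 \;=\; p^a,
\]
which holds, with equality, uniformly in $\ell$. Theorem \ref{thm:cotron} then yields that $G(\tau)$ is lacunary modulo $2^j$ for every positive integer $j$; specializing to $j=k$ gives exactly \eqref{lacunary}. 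In fact, the cited theorem furnishes the stronger quantitative statement that the number of $n\leq X$ with $\overline{b}_\ell(n)\not\equiv 0\pmod{2^k}$ is $O\!\left(X/\log^{\alpha}X\right)$ for some $\alpha>0$ depending on $k$.

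There is essentially no substantive obstacle beyond bookkeeping: once the generating function is recognized as an eta-quotient with integer weight, the whole argument reduces to a single numerical inequality, which happens to saturate with $p^a=4$. The only point that warrants explicit checking is the cancellation of the fractional $q$-powers introduced by the conversion $f_k \mapsto \eta(k\tau)$, so that $G(q)$ and $G(\tau)$ coincide as $q$-series with no reindexing shift, and hence the lacunarity of $G(\tau)$ transfers verbatim to the coefficient sequence $\overline{b}_\ell(n)$.
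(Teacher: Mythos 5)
The statement you were asked about is Theorem \ref{thm:cotron}, which is quoted verbatim from Cotron, Michaelsen, Stamm, and Zhu and is not proved in this paper at all; your argument does not prove it either, since your very first step is to \emph{apply} Theorem \ref{thm:cotron}. A genuine proof of that general statement would have to show that an eta-quotient satisfying the stated divisibility and size conditions is congruent modulo $p^j$ to a holomorphic modular form of integer weight and then invoke Serre-type density results for the Fourier coefficients of such forms modulo $M$; none of that machinery appears in your write-up, so as a proof of the quoted theorem the proposal is circular.

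That said, what you have actually written is, line for line, the paper's proof of Theorem \ref{thm:lacunary}: the same rewriting of \eqref{eq-gf-2} as $\eta(4\tau)^{\ell}/\bigl(\eta(\tau)^{2\ell}\eta(2\tau)^{\ell}\bigr)$, the same parameter identification $u=1$, $\delta_1=4$, $r_1=\ell$, $t=2$, $(\gamma_1,s_1)=(1,2\ell)$, $(\gamma_2,s_2)=(2,\ell)$, weight $-\ell\in\mathbb{Z}$, $\mathcal{D_G}=4$, and the same verification that $2^2\mid 4$ and $2^2\geq\sqrt{4\ell/(\ell/4)}=4$. Your additional observation that the fractional powers of $q$ coming from $f_k=q^{-k/24}\eta(k\tau)$ cancel exactly, so that lacunarity of $G(\tau)$ transfers directly to the coefficients $\overline{b}_{\ell}(n)$, is correct and is a point the paper glosses over. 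So if the intended target was Theorem \ref{thm:lacunary}, your proof is correct and essentially identical to the paper's; if the target really is the quoted Theorem \ref{thm:cotron}, you have not engaged with it.
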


\begin{proof}[Proof of Theorem \ref{thm:lacunary}]
From \eqref{eq-gf-2}, we recall
\begin{align*}
    \sum_{n\geq 0}\btt_{\ell}(n)q^n=\frac{f_4^{\ell}}{f_1^{2{\ell}}f_2^{\ell}}=\frac{\eta^{\ell}(4z)}{\eta^{2\ell}(z)\eta^{\ell}(2z)}.
\end{align*}
Following the notations used in \eqref{G(tau)} and the paragraph succeeding it, we have \[\delta_1=4, r_1=\ell, \quad \gamma_1=1,\quad s_1=2{\ell},\quad \gamma_2=2, \quad \text{and} \quad s_2=\ell.\] Also $\mathcal{D_G}=4$ and the weight is $-\ell \in \mathbb{Z}$. Next, we see that $2^2|4$ and
\begin{align*}
    2^2\geq\sqrt{\frac{2\ell+2\ell}{\frac{\ell}{4}}}=\sqrt{16}=4.
\end{align*}
Choosing $p=2$ and $a=2$ in Theorem \ref{thm:cotron}, we complete the proof.
\end{proof}

\section{Sketch Proof of Theorem \ref{thm5}}\label{sec:sketchpf}
From the work of Sellers \cite[Corollary 2.3]{overcubicsellers} we can write
\[
\sum_{n\geq 0}\btt_{t}(n)q^n=\left(\varphi(q)\prod_{i\geq 1}\varphi(q^{2^i})^{3\cdot 2^{i-1}}\right)^{t},
\]
where \[\varphi(q):=1+2\sum_{n\geq 0}q^{n^2}=\varphi(q^4) + 2q \psi (q^8),\] with $\psi(q):=\sum_{n\geq 0}q^{n(n+1)/2}$. Since $\left(\prod_{i\geq3}\varphi(q^{2^i})\right)^{3\cdot 2^{i-1}\cdot t}$ is a function of $q^8$, it is enough to do the $8$-dissection of the first three terms. Also, as the highest modulus involved in the theorem is $16$ and the other moduli are divisors of $16$, we will prove our result modulo if we consider only modulo $16$ this $8$-dissection. Re-writing
\[
\sum_{n\geq 0}\btt_t(n)q^n=\left(\sum_{j=0}^{7}a_{t,j}q^{j}F_{t,j}(q^8)\right) \left(\prod_{i\geq3}\varphi(q^{2^i})\right)^{3\cdot 2^{i-1}\cdot t},
\]
where $F_{t,j}(q^8)$ is a function of $q^8$ whose power series representation has integer coefficients. It suffices to just prove the following congruences
\begin{align*}
    a_{t,1} &\equiv 0 \pmod{2},\\
    a_{t,2} &\equiv 0 \pmod{2},\\
    a_{t,3} &\equiv 0 \pmod{4},\\
    a_{t,4} &\equiv 0 \pmod{2},\\
    a_{t,5} &\equiv 0 \pmod{8},\\
    a_{t,6} &\equiv 0 \pmod{4},\\
    a_{t,7} &\equiv 0 \pmod{16}.
\end{align*}
We can do this using induction, and follows exactly the same pattern as the proofs of \cite[Theorem 2.2]{sSellers} and \cite[Lemma 6.1]{SSS}, so we omit the details here. The interested reader can look at the accompanying Mathematica file available at \url{https://manjilsaikia.in/publ/mathematica/overcubictuples.nb}.

\section{Concluding Remarks}\label{sec:conc}

\begin{enumerate}
    \item There is a very related function to the $\btt_k(n)$ function, namely overpartition $k$-tuples with odd parts. We denote by $\opt_k(n)$ the number of overpartition $k$-tuples with odd parts of $n$. The generating function is given by
    \[
    \sum_{\geq 0}\opt_k(n)=\frac{f_2^{3k}}{f_1^{2k}f_4^k}.
    \] In a series of papers \cite{SSS, DSS}, the authors and their collaborators have studied several arithmetic properties that this function satisfies. It seems that, some of the techniques used in these papers translate directly for the $\btt_k(n)$ function. In particular, the authors in \cite{DSS} have explored congruences modulo powers of $2$ and $3$ for the $\opt_{2k+1}(n)$ function (with $k\geq 1$), which we believe may lead to similar results for the $\btt_k(n)$ function. Additionally, in \cite{SSS} the authors have also found inifnite family of congruences modulo small powers of $2$ for $\opt_{2k+1}(n)$ (with $k\geq 1$) and $\opt_4(n)$, we believe similar results also hold for $\btt_k(n)$.
    \item We have just scratched the surface for congruences modulo powers of $2$, a systematic study will unearth several more. For instance, we believe that Theorem \ref{thm5} can be strengthened further by taking appropriate values of $k$; however, the techniques that we have used in this paper seem unsuitable to find a more general result. We again leave this as a future direction of research.
    \item Elementary proof of Theorem \ref{thm1} is not difficult to obtain, we took the algorithmic approach for the sake of brevity. The modulo $4$ congruences are not difficult to see, and in fact do follow from elementary dissection formulas. However, the modulo $32$ and $64$ congruences need some more work. Elementary proof of Theorem \ref{thm3} would be further involved, and would probably give some hints on how to prove Conjecture \ref{conj2}.
\end{enumerate}

\section*{Acknowledgements}

Abhishek Sarma was partially supported by an institutional fellowship for doctoral research from Tezpur University, Assam, India. He thanks the funding institution. The authors thank the anonymous referee for taking the time to read through the manuscript and for providing helpful comments.


\begin{thebibliography}{CMSZ20}

\bibitem[And98]{AndrewsBook}
George~E. Andrews.
\newblock {\em The theory of partitions}.
\newblock Cambridge Mathematical Library. Cambridge University Press, Cambridge, 1998.
\newblock Reprint of the 1976 original.

\bibitem[AP24]{AndrewsPaule2}
George~E. Andrews and Peter Paule.
\newblock {MacMahon}'s partition analysis. {XIV}: {Partitions} with {{\(n\)}} copies of {{\(n\)}}.
\newblock {\em J. Comb. Theory, Ser. A}, 203:34, 2024.
\newblock Id/No 105836.

\bibitem[BD22]{matching}
Nayandeep~Deka Baruah and Hirakjyoti Das.
\newblock Matching coefficients in the series expansions of certain {{\(q\)}}-products and their reciprocals.
\newblock {\em Ramanujan J.}, 59(2):511--548, 2022.

\bibitem[Cha10a]{Chan1}
Hei-Chi Chan.
\newblock Ramanujan's cubic continued fraction and an analog of his ''most beautiful identity''.
\newblock {\em Int. J. Number Theory}, 6(3):673--680, 2010.

\bibitem[Cha10b]{Chan2}
Hei-Chi Chan.
\newblock Ramanujan's cubic continued fraction and {Ramanujan} type congruences for a certain partition function.
\newblock {\em Int. J. Number Theory}, 6(4):819--834, 2010.

\bibitem[CMSZ20]{cotronetal}
Tessa Cotron, Anya Michaelsen, Emily Stamm, and Weitao Zhu.
\newblock Lacunary eta-quotients modulo powers of primes.
\newblock {\em Ramanujan J.}, 53(2):269--284, 2020.

\bibitem[DSS23]{DSS}
Hirakjyoti Das, Manjil~P. Saikia, and Abhishek Sarma.
\newblock Arithmetic properties modulo powers of $2$ and $3$ for overpartition $k$-tuples with odd parts.
\newblock preprint, 2023.

\bibitem[Joh20]{Johnson}
Warren~P. Johnson.
\newblock {\em An introduction to {$q$}-analysis}.
\newblock American Mathematical Society, Providence, RI, 2020.

\bibitem[Kim10]{Kim}
Byungchan Kim.
\newblock The overcubic partition function mod 3.
\newblock In {\em Ramanujan rediscovered. Proceedings of a conference on elliptic functions, partitions, and \(q\)-series in memory of K. Venkatachaliengar, Bangalore, India, June 1--5, 2009}, pages 157--163. Mysore: Ramanujan Mathematical Society, 2010.

\bibitem[Kim12]{Kim2}
Byungchan Kim.
\newblock On partition congruences for overcubic partition pairs.
\newblock {\em Commun. Korean Math. Soc.}, 27(3):477--482, 2012.

\bibitem[NDK24]{NayakaDharmendraKumar}
S.~Shivaprasada Nayaka, B.~N. Dharmendra, and M.~C.~Mahesh Kumar.
\newblock Divisibility properties for overcubic partition triples.
\newblock {\em Integers}, 24:paper a80, 9, 2024.

\bibitem[Rad09]{Radu2}
Cristian-Silviu Radu.
\newblock An algorithmic approach to {R}amanujan's congruences.
\newblock {\em Ramanujan J.}, 20(2):215--251, 2009.

\bibitem[Rad15]{Radu}
Cristian-Silviu Radu.
\newblock An algorithmic approach to {R}amanujan-{K}olberg identities.
\newblock {\em J. Symbolic Comput.}, 68(1):225--253, 2015.

\bibitem[Sai23]{Saikia}
Manjil~P. Saikia.
\newblock Some missed congruences modulo powers of 2 for {{\(t\)}}-colored overpartitions.
\newblock {\em Bol. Soc. Mat. Mex., III. Ser.}, 29(1):10, 2023.
\newblock Id/No 15.

\bibitem[Sel14]{overcubicsellers}
James~A. Sellers.
\newblock Elementary proofs of congruences for the cubic and overcubic partition functions.
\newblock {\em Australas. J. Comb.}, 60:191--197, 2014.

\bibitem[Sel24]{sSellers}
James~A. Sellers.
\newblock An elementary proof of a conjecture of {S}aikia on congruences for $t$--colored overpartitions.
\newblock {\em Bol. Soc. Mat. Mex.}, 30(2), 2024.
\newblock Id/No 2.

\bibitem[Smo21]{Smoot}
Nicolas~Allen Smoot.
\newblock On the computation of identities relating partition numbers in arithmetic progressions with eta quotients: an implementation of {R}adu's algorithm.
\newblock {\em J. Symbolic Comput.}, 104:276--311, 2021.

\bibitem[SSS24]{SSS}
Manjil~P. Saikia, Abhishek Sarma, and James~A. Sellers.
\newblock Arithmetic properties for overpartition $k$--tuples with odd parts modulo powers of 2.
\newblock {\em J. Ramanujan Math. Soc.}, accepted, 2024.

\bibitem[ZZ11]{ZhaoZhong}
Haijian Zhao and Zheyuan Zhong.
\newblock Ramanujan type congruences for a partition function.
\newblock {\em Electron. J. Comb.}, 18(1):research paper p58, 9, 2011.

\end{thebibliography}
\end{document}